\newtheorem{theorem}{Theorem}
\newtheorem{axiom}{Axiom}
\newtheorem{definition}[axiom]{Definition}
\newtheorem{lemma}[theorem]{Lemma}
\newenvironment{remark}{\rem\rm}{\endrem}
\newcounter{unnumber}
\newenvironment{proof}{\prf\rm}{\hfill{$\blacksquare$}\endprf}
\newcommand{\R}{\mathbb{R}}%
\newcommand{\N}{\mathbb{N}}%
\newcommand{\ol}{\overline}%
\renewcommand{\>}{\right\rangle}
\DeclareMathOperator*\dom{dom}%
\DeclareMathOperator*\B{\overline{\R}}%
\DeclareMathOperator*\crit{crit}
\DeclareMathOperator*\dist{dist}
\title{Newton-like dynamics associated to nonconvex optimization problems}
\author{Radu Ioan Bo\c{t} \thanks{University of Vienna, Faculty of Mathematics, Oskar-Morgenstern-Platz 1, A-1090 Vienna, Austria,
email: radu.bot@univie.ac.at. Research partially supported by FWF (Austrian Science Fund), project I 2419-N32.} \and
Ern\"{o} Robert Csetnek \thanks {University of Vienna, Faculty of Mathematics, Oskar-Morgenstern-Platz 1, A-1090 Vienna, Austria,
email: ernoe.robert.csetnek@univie.ac.at. Research supported by FWF (Austrian Science Fund), project P 29809-N32.}}
\begin{document}
\maketitle

\noindent \textbf{Abstract.} We consider the dynamical system 
\begin{equation*}\left\{
\begin{array}{ll}
v(t)\in\partial\phi(x(t))\\
\lambda\dot x(t) + \dot v(t) + v(t) + \nabla \psi(x(t))=0,
\end{array}\right.\end{equation*}
where $\phi:\R^n\to\R\cup\{+\infty\}$ is a proper, convex and lower semicontinuous function, $\psi:\R^n\to\R$ is a (possibly nonconvex) smooth function 
and $\lambda>0$ is a parameter which controls the velocity. We show that the set of limit points of the trajectory $x$ is contained in the set 
of critical points of the objective function $\phi+\psi$, which is here seen as the set of the zeros of its limiting subdifferential. If the objective function satisfies the 
Kurdyka-\L{}ojasiewicz property, then we can prove convergence of the whole trajectory $x$ to a critical point. Furthermore, convergence rates for the orbits are obtained in terms of 
the \L{}ojasiewicz exponent of the objective function, provided the latter satisfies the \L{}ojasiewicz property.
\vspace{1ex}

\noindent \textbf{Key Words.} dynamical systems, Newton-like methods, Lyapunov analysis, nonsmooth optimization, limiting subdifferential, 
Kurdyka-\L{}ojasiewicz property
\vspace{1ex}

\noindent \textbf{AMS subject classification.} 34G25, 47J25, 47H05, 90C26, 90C30, 65K10 

\section{Introduction and preliminaries}\label{sec1}

The dynamical system 
\begin{equation}\label{att-sv-dyn}\left\{
\begin{array}{ll}
v(t)\in T(x(t))\\
\lambda(t)\dot x(t) + \dot v(t) + v(t)=0,
\end{array}\right.\end{equation}
where $\lambda:[0,+\infty)\to [0,+\infty)$ and $T:\R^n\rightrightarrows \R^n$ is a (set-valued) maximally monotone operator, 
has been introduced and investigated in \cite{att-sv2011} as a continuous version of Newton and Levenberg-Marquardt-type algorithms.
It has been shown that under mild conditions on $\lambda$ the trajectory $x(t)$ converges weakly to a zero of the operator $T$, while $v(t)$ converges to zero as $t \rightarrow +\infty$.

These investigations have been continued in \cite{abbas-att-sv} in the context of solving optimization problems of the form 
\begin{equation}\label{opt-intr}\inf_{x\in\R^n}\{\phi(x)+\psi(x)\},\end{equation}
where $\phi:\R^n\to\R\cup\{+\infty\}$ is a proper, convex and lower semicontinuous function and $\psi:\R^n\to\R$ is a convex  and differentiable function with locally Lipschitz-continuous gradient. More precisely, problem \eqref{opt-intr} has been approached via the dynamical system 
\begin{equation}\label{dyn-intr}\left\{
\begin{array}{ll}
v(t)\in\partial\phi(x(t))\\
\lambda(t)\dot x(t) + \dot v(t) + v(t) + \nabla \psi(x(t))=0,
\end{array}\right.\end{equation}
where $\partial\phi$ is the convex subdifferential of $\phi$. It has been shown in  \cite{abbas-att-sv} that if the set of minimizers of \eqref{opt-intr} is nonempty and 
some mild conditions on the damping function $\lambda$ are satisfied, then the trajectory $x(t)$ converges to a minimizer of \eqref{opt-intr} as $t \rightarrow +\infty$. Further investigations on dynamical systems of similar type have been reported in \cite{abbas} and \cite{b-c-lev-marq}. 

The aim of this paper is to perform an asymptotic analysis of the dynamical system \eqref{dyn-intr} in the absence of the convexity of $\psi$, for constant damping function $\lambda$ and by assuming that the objective function of \eqref{opt-intr} satisfies the \emph{Kurdyka-\L{}ojasiewicz} property, in other words is a  \emph{KL function}. To the class of KL functions belong semialgebraic, real subanalytic, uniformly convex and convex functions satisfying a growth condition. The convergence analysis relies on methods of real algebraic geometry introduced by \L{}ojasiewicz \cite{lojasiewicz1963} and Kurdyka \cite{kurdyka1998} and developed 
recently in the nonsmooth setting by Attouch, Bolte and Svaiter \cite{att-b-sv2013} and Bolte, Sabach and Teboulle \cite{b-sab-teb}. 

Optimization problems involving KL functions have attracted the interest of the community 
since the works of \L{}ojasiewicz \cite{lojasiewicz1963}, Simon \cite{simon}, Haraux and Jendoubi \cite{h-j}.  The most important contributions of the last years in the field include the works of
Alvarez, Attouch, Bolte and  Redont \cite[Section 4]{alv-att-bolte-red} and Bolte, Daniilidis and Lewis \cite[Section 4]{b-d-l2006}. Ever since the interest in this topic increased continuously 
(see \cite{attouch-bolte2009, att-b-red-soub2010, att-b-sv2013, b-n-p-s, b-sab-teb, b-c-kl, bcl, b-c-hidd, c-pesquet-r, f-g-peyp, 
h-l-s-t, ipiano}).

In the first part of the paper we show that the set of limit points of the trajectory $x$ generated by \eqref{dyn-intr} is entirely contained in the set of critical 
points of the objective function $\phi+\psi$, which is seen as the set of zeros of its limiting subdifferential. Under some 
supplementary conditions, including the Kurdyka-\L{}ojasiewicz property, we prove the convergence 
of the trajectory $x$ to a critical point of $\phi+\psi$. Furthermore, convergence rates for the orbits are obtained in terms of the \L{}ojasiewicz exponent of the objective function, provided the latter satisfies the \L{}ojasiewicz property.

In the following we recall  some notions and results which are needed throughout the paper. We consider on $\R^n$ the Euclidean scalar product 
and the corresponding norm denoted by $\langle\cdot,\cdot\rangle$ and $\|\cdot\|$, respectively. 

The {\it domain} of the function  $f:\R^n\rightarrow \R\cup\{+\infty\}$ is defined by $\dom f=\{x\in\R^n:f(x)<+\infty\}$ and  we say that $f$ is {\it proper}, if it has a nonempty domain. For the following generalized subdifferential notions and their basic properties we refer to \cite{borwein-zhu, boris-carte, rock-wets}. 
Let $f:\R^n\rightarrow \R\cup\{+\infty\}$ be a proper and lower semicontinuous function. The {\it Fr\'{e}chet (viscosity)  
subdifferential} of $f$ at $x\in\dom f$ is the set $$\hat{\partial}f(x)= \left \{v\in\R^n: \liminf_{y\rightarrow x}\frac{f(y)-f(x)-\<v,y-x\>}{\|y-x\|}\geq 0 \right \}.$$ If
$x\notin\dom f$, we set $\hat{\partial}f(x):=\emptyset$. The {\it limiting (Mordukhovich) subdifferential} is defined at $x\in \dom f$ by 
$$\partial_L f(x)=\{v\in\R^n:\exists x_k\rightarrow x,f(x_k)\rightarrow f(x)\mbox{ and }\exists v_k\in\hat{\partial}f(x_k),v_k\rightarrow v \mbox{ as }k\rightarrow+\infty\},$$
while for $x \notin \dom f$, we set $\partial_L f(x) :=\emptyset$. Obviously,  $\hat\partial f(x)\subseteq\partial_L f(x)$ for each $x\in\R^n$.

When $f$ is convex, these subdifferential notions coincide with the {\it convex subdifferential}, thus 
$\hat\partial f(x)=\partial_L f(x)=\partial f(x)=\{v\in\R^n:f(y)\geq f(x)+\<v,y-x\> \ \forall y\in \R^n\}$ for all $x\in\R^n$. 

The following {\it closedness criterion} of the graph of the limiting subdifferential will be used in the convergence analysis: 
if $(x_k)_{k\in\N}$ and $(v_k)_{k\in\N}$ are sequences in $\R^n$ such that 
$v_k\in\partial_L f(x_k)$ for all $k\in\N$, $(x_k,v_k)\rightarrow (x,v)$ and $f(x_k)\rightarrow f(x)$ as $k\rightarrow+\infty$, then 
$v\in\partial_L f(x)$. 

The Fermat rule reads in this nonsmooth setting as follows: if $x\in\R^n$ is a local minimizer of $f$, then $0\in\partial_L f(x)$.  We denote by 
$$\crit(f)=\{x\in\R^n: 0\in\partial_L f(x)\}$$ the set of {\it (limiting)-critical points} of $f$. 

When $f$ is continuously differentiable around $x \in \R^n$ we have $\partial_L f(x)=\{\nabla f(x)\}$. We will also make use of the following subdifferential sum rule:
if $f:\R^n\rightarrow\R\cup\{+\infty\}$ is proper and lower semicontinuous  and $h:\R^n\rightarrow \R$ is a 
continuously differentiable function, then $\partial_L (f+h)(x)=\partial_L f(x)+\nabla h(x)$ for all $x\in\R^n$. 

Further, we recall the notion of a locally absolutely continuous function and state two of its basic properties. 

\begin{definition}\label{abs-cont} \rm (see \cite{att-sv2011, abbas-att-sv}) 
A function $x : [0,+\infty) \rightarrow \R^n$ is said to be locally absolutely continuous, if it absolutely continuous on every interval $[0,T]$ for $T > 0$.
\end{definition}

\begin{remark}\label{rem-abs-cont}\rm\begin{enumerate} \item[(a)] An absolutely continuous function is differentiable almost 
everywhere, its derivative coincides with its distributional derivative almost everywhere and one can recover the function from its 
derivative $\dot x=y$ by integration. 

\item[(b)] If $x:[0,T]\rightarrow \R^n$ is absolutely continuous for $T > 0$ and $B:\R^n\rightarrow \R^n$ is 
$L$-Lipschitz continuous for $L\geq 0$, then the function $z=B\circ x$ is absolutely continuous, too.  Moreover, $z$ is differentiable almost everywhere on $[0,T]$ and the inequality 
$\|\dot z (t)\|\leq L\|\dot x(t)\|$ holds for almost every $t \in [0,T]$.  
\end{enumerate}
\end{remark}

The following two results, which can be interpreted as continuous versions of the quasi-Fej\'er monotonicity for sequences, 
will play an important role in the asymptotic analysis of the trajectories of the dynamical system \eqref{dyn-intr}. 
For their proofs we refer the reader  to \cite[Lemma 5.1]{abbas-att-sv} and \cite[Lemma 5.2]{abbas-att-sv}, respectively.

\begin{lemma}\label{fejer-cont1} Suppose that $F:[0,+\infty)\rightarrow\R$ is locally absolutely continuous and bounded from below and that
there exists $G\in L^1([0,+\infty))$ such that for almost every $t \in [0,+\infty)$ $$\frac{d}{dt}F(t)\leq G(t).$$ 
Then there exists $\lim_{t\rightarrow \infty} F(t)\in\R$. 
\end{lemma}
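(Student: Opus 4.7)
The plan is to reduce the statement to the elementary fact that a nonincreasing function bounded from below has a finite limit at infinity. The auxiliary function I would introduce is
\[
H(t) := F(t) - \int_0^t G(s)\,ds, \qquad t \in [0,+\infty).
\]
Since $F$ is locally absolutely continuous and $G \in L^1([0,+\infty))$ (so that $t\mapsto \int_0^t G(s)\,ds$ is absolutely continuous on every $[0,T]$), the function $H$ is locally absolutely continuous on $[0,+\infty)$.

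Next I would verify the two properties of $H$ needed to conclude. First, differentiating almost everywhere and using the hypothesis, $\dot H(t) = \dot F(t) - G(t) \leq 0$ for almost every $t$. Combined with local absolute continuity and the fundamental theorem of calculus, this gives $H(t_2) - H(t_1) = \int_{t_1}^{t_2} \dot H(s)\,ds \leq 0$ for $0 \leq t_1 \leq t_2$, so $H$ is nonincreasing. Second, using that $F$ is bounded from below by some $m \in \R$ and that $\int_0^t G(s)\,ds \leq \int_0^{+\infty} |G(s)|\,ds = \|G\|_1 < +\infty$, one obtains
\[
H(t) \geq m - \|G\|_1 \quad \text{for all } t \geq 0,
\]
so $H$ is bounded from below.

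A nonincreasing real function that is bounded from below admits a finite limit as $t \to +\infty$, hence $\lim_{t \to +\infty} H(t) \in \R$. On the other hand, $G \in L^1([0,+\infty))$ implies $\lim_{t\to +\infty} \int_0^t G(s)\,ds = \int_0^{+\infty} G(s)\,ds \in \R$. Writing $F(t) = H(t) + \int_0^t G(s)\,ds$ and adding the two limits yields $\lim_{t\to +\infty} F(t) \in \R$, as claimed.

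No step poses a genuine obstacle; the only point that requires mild care is ensuring that the a.e. pointwise bound on $\dot H$ propagates to monotonicity of $H$, which is exactly where local absolute continuity (and not merely differentiability a.e.) is used through the fundamental theorem of calculus for absolutely continuous functions recalled in Remark~\ref{rem-abs-cont}(a).
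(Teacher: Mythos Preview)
Your argument is correct. Note, however, that the paper does not give its own proof of this lemma: it simply refers the reader to \cite[Lemma~5.1]{abbas-att-sv}. Your reduction via $H(t)=F(t)-\int_0^t G(s)\,ds$ is the standard way to establish the result and would be an acceptable self-contained substitute for that citation.
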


\begin{lemma}\label{fejer-cont2}  If $1 \leq p < \infty$, $1 \leq r \leq \infty$, $F:[0,+\infty)\rightarrow[0,+\infty)$ is 
locally absolutely continuous, $F\in L^p([0,+\infty))$, $G:[0,+\infty)\rightarrow\R$, $G\in  L^r([0,+\infty))$ and 
for almost every $t \in [0,+\infty)$ $$\frac{d}{dt}F(t)\leq G(t),$$ then $\lim_{t\rightarrow +\infty} F(t)=0$. 
\end{lemma}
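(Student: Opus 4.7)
My plan is to argue by contradiction, showing that if $F(t)$ does not converge to $0$, then $F$ must remain uniformly bounded away from $0$ on a family of pairwise disjoint intervals of a fixed positive length, which would contradict $F\in L^p([0,+\infty))$.

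The starting point is that since $F$ is locally absolutely continuous and $\dot F(u)\le G(u)$ almost everywhere, integration on $[s,t]$ with $0\le s\le t$ yields
\[
F(s)\ge F(t)-\int_s^t G(u)\,du\ge F(t)-\int_s^t |G(u)|\,du.
\]
The $L^r$-assumption on $G$ is then used to control this last integral on short intervals, uniformly in the base point. For $1<r<+\infty$, H\"older's inequality applied to $|G|\cdot 1$ gives $\int_s^t|G(u)|\,du\le (t-s)^{1-1/r}\|G\|_{L^r([0,+\infty))}$, and for $r=+\infty$ the analogous bound holds with the exponent $1$; in both regimes the right-hand side tends to $0$ uniformly in $s,t$ as $t-s\to 0$. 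The case $r=1$ is the one subtlety: here H\"older gives no decay, and instead I would invoke the absolute continuity of the Lebesgue integral of $|G|$ on any fixed compact set, together with the fact that $\int_T^{+\infty}|G|\,du$ is arbitrarily small for $T$ large, to conclude that the map $u\mapsto\int_0^u|G(v)|\,dv$ is uniformly continuous on $[0,+\infty)$. Either way, the conclusion is: for every $\eta>0$ there exists $\delta>0$ such that $\int_s^t|G(u)|\,du\le\eta$ whenever $0\le t-s\le\delta$.

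With this uniform modulus in hand, suppose $F(t)\not\to 0$. Then there exist $\varepsilon>0$ and a sequence $t_k\to +\infty$ with $F(t_k)\ge\varepsilon$. Choose $\delta>0$ such that $\int_s^t|G(u)|\,du\le\varepsilon/2$ whenever $0\le t-s\le\delta$; the displayed inequality then forces $F(s)\ge\varepsilon/2$ for every $s\in[t_k-\delta,t_k]$ and every sufficiently large $k$. After passing to a subsequence so that the intervals $[t_k-\delta,t_k]$ are pairwise disjoint, one obtains
\[
\int_0^{+\infty} F(s)^p\,ds\ge\sum_k\int_{t_k-\delta}^{t_k} F(s)^p\,ds\ge\sum_k\Bigl(\frac{\varepsilon}{2}\Bigr)^p\delta=+\infty,
\]
contradicting $F\in L^p([0,+\infty))$.

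The principal obstacle is indeed the endpoint case $r=1$, where the H\"older-type $(t-s)^{1-1/r}$ bound degenerates; this is why the uniform continuity of $u\mapsto\int_0^u|G(v)|\,dv$ has to be extracted directly from $G\in L^1([0,+\infty))$. Once this uniform modulus is secured, the remainder is a straightforward packing argument exploiting $F\in L^p$.
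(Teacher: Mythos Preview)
Your argument is correct. Note, however, that the paper does not supply its own proof of this lemma: it explicitly refers the reader to \cite[Lemma~5.2]{abbas-att-sv}. So there is no in-paper proof to compare against; your proposal stands as an independent proof.

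A few remarks on the content. The contradiction-and-packing strategy is sound. The key estimate $F(s)\ge F(t)-\int_s^t|G(u)|\,du$ follows directly from the local absolute continuity of $F$, and your case split on $r$ to obtain a uniform modulus $\int_s^t|G|\le\eta$ for $t-s\le\delta$ is fine. For $r=1$ you could streamline: the absolute continuity of the Lebesgue integral of an $L^1$ function on $[0,+\infty)$ already gives, for every $\eta>0$, a $\delta>0$ with $\int_E|G|\le\eta$ whenever $|E|\le\delta$, without decomposing into a compact piece and a tail. But your two-step argument is also correct and in fact sketches a proof of that very fact. The final step---extracting a subsequence of the $t_k$ so that the intervals $[t_k-\delta,t_k]$ are disjoint and summing $(\varepsilon/2)^p\delta$ to force $\|F\|_{L^p}=+\infty$---is standard and valid.
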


The following result, which is due to Br\'{e}zis (\cite[Lemme 3.3, p. 73]{brezis}; see also \cite[Lemma 3.2]{att-cza-10}), 
provides an expression for the derivative of the composition of convex functions with absolutely continuous trajectories.

\begin{lemma}\label{diff-brezis} Let $f:\R^n\rightarrow \R\cup\{+\infty\}$ be a proper, convex and lower semicontinuous function. 
Let $x\in L^2([0,T],\R^n)$ be absolutely continuous such that $\dot x\in L^2([0,T],\R^n)$ and $x(t)\in\dom f$ for almost every 
$t \in [0,T]$. Assume that there exists $\xi\in L^2([0,T],\R^n)$ such that $\xi(t)\in\partial f(x(t))$ for almost every $t \in [0,T]$. Then the function 
$t\mapsto f(x(t))$ is absolutely continuous and for almost every $t$ such that $x(t)\in\dom \partial f$ we have 
$$\frac{d}{dt}f(x(t))=\langle \dot x(t),h\rangle \ \forall h\in\partial f(x(t)).$$ 
\end{lemma}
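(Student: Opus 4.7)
The plan is to reduce the nonsmooth statement to the smooth case by Moreau--Yosida regularization of $f$. For $\lambda>0$, introduce the Moreau envelope $f_\lambda(y)=\min_{z\in\R^n}\{f(z)+(2\lambda)^{-1}\|z-y\|^2\}$, which is convex and of class $C^{1,1}$ with $\lambda^{-1}$-Lipschitz gradient $\nabla f_\lambda$. Since $x([0,T])$ is compact, $f_\lambda\circ x$ is absolutely continuous, and the classical $C^1$ chain rule applied to $f_\lambda$ gives, after integration,
$$f_\lambda(x(t))-f_\lambda(x(s))=\int_s^t\langle \nabla f_\lambda(x(\tau)),\dot x(\tau)\rangle\,d\tau,\qquad 0\leq s\leq t\leq T.$$

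Next I would pass to the limit $\lambda\downarrow 0$ in this identity. Standard properties of the Moreau envelope yield $f_\lambda(y)\uparrow f(y)$ for every $y\in\R^n$, together with $\|\nabla f_\lambda(x(\tau))\|\leq\|\xi(\tau)\|$ on the full-measure set where $\xi(\tau)\in\partial f(x(\tau))$. Combined with $\xi,\dot x\in L^2([0,T],\R^n)$, the Cauchy--Schwarz inequality provides the integrable majorant $\tau\mapsto\|\xi(\tau)\|\,\|\dot x(\tau)\|$, so dominated convergence produces a representation of $f(x(t))-f(x(s))$ as the integral of an $L^1$ function; in particular, $t\mapsto f(x(t))$ is absolutely continuous on $[0,T]$. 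For the pointwise identity, fix $t\in(0,T)$ at which $f\circ x$ is differentiable and $x(t)\in\dom\partial f$, and pick any $h\in\partial f(x(t))$. The convex subdifferential inequality applied at the points $x(t\pm\varepsilon)$ yields
$$\frac{f(x(t+\varepsilon))-f(x(t))}{\varepsilon}\geq\Big\langle h,\tfrac{x(t+\varepsilon)-x(t)}{\varepsilon}\Big\rangle,\qquad \frac{f(x(t))-f(x(t-\varepsilon))}{\varepsilon}\leq\Big\langle h,\tfrac{x(t)-x(t-\varepsilon)}{\varepsilon}\Big\rangle,$$
and sending $\varepsilon\to 0^+$ sandwiches $\frac{d}{dt}f(x(t))$ between two copies of $\langle h,\dot x(t)\rangle$, proving equality for every $h\in\partial f(x(t))$.

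The delicate step is the dominated-convergence passage: it hinges on the Moreau-envelope bound $\|\nabla f_\lambda(x(\tau))\|\leq\|\xi(\tau)\|$ (valid because $\xi(\tau)\in\partial f(x(\tau))$) together with the $L^2$ hypotheses on $\xi$ and $\dot x$, which combine to dominate the integrand uniformly in $\lambda$. Once absolute continuity is in hand, the pointwise chain rule is a short consequence of convexity and does not require any further regularity of $\xi$.
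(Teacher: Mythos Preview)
The paper does not give its own proof of this lemma; it is quoted as a known result and attributed to Br\'ezis \cite[Lemme 3.3, p.~73]{brezis} (with a secondary reference to \cite[Lemma 3.2]{att-cza-10}). Your Moreau--Yosida approach is essentially Br\'ezis's original argument, so in that sense you have reconstructed the cited proof rather than diverged from it.

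Two small technical points are worth tightening. In the dominated-convergence step you supply the majorant $\|\xi(\tau)\|\,\|\dot x(\tau)\|$, but you should also state the pointwise limit: for a.e.\ $\tau$ (namely those with $\xi(\tau)\in\partial f(x(\tau))$) one has $\nabla f_\lambda(x(\tau))\to(\partial f)^0(x(\tau))$, the element of minimal norm in $\partial f(x(\tau))$; domination alone does not trigger the theorem. Second, in the sandwich argument for the derivative you need $t$ to be a common differentiability point of both $x$ and $f\circ x$; this is implicit but should be said, since the difference quotients on the right-hand sides must converge to $\dot x(t)$. With these clarifications the argument is complete and matches the classical proof.
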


\section{Asymptotic analysis}\label{sec2}

In this paper we investigate the dynamical system 
\begin{equation}\label{dyn-syst}\left\{
\begin{array}{ll}
v(t)\in\partial \phi(x(t))\\
\lambda\dot x(t)+\dot v(t)+v(t)+\nabla \psi(x(t))=0\\
x(0)=x_0, v(0)=v_0\in\partial\phi(x_0),
\end{array}\right.\end{equation}
where $x_0,v_0\in\R^n$ and $\lambda>0$. 
We assume that $\phi:\R^n\rightarrow\R\cup\{+\infty\}$ is proper, convex and lower semicontinuous and
$\psi:\R^n\rightarrow\R$ is possibly nonconvex and Fr\'{e}chet differentiable with $L$-Lipschitz continuous gradient, 
for $L>0$; in other words, $\|\nabla\psi(x)-\nabla\psi(y)\|\leq L\|x-y\|$ for all $x,y\in \R^n$.

In the following we specify what we understand under a solution of the dynamical system \eqref{dyn-syst}.

\begin{definition}\label{str-sol}\rm Let $x_0,v_0\in\R^n$ and $\lambda>0$ be such that $v_0\in\partial\phi(x_0)$. We say that the pair $(x,v)$ is a strong global solution of \eqref{dyn-syst} if the following properties are satisfied: 
\begin{enumerate}
\item[(i)] $x,v:[0,+\infty)\rightarrow \R^n$ are locally absolutely continuous functions;

\item[(ii)] $v(t)\in\partial\phi(x(t))$ for every $t\in[0,+\infty)$;

\item[(iii)] $\lambda\dot x(t) + \dot v(t) + v(t) + \nabla \psi(x(t))=0$ for almost every $t\in[0,+\infty)$;

\item[(iv)] $x(0)=x_0, v(0)=v_0$.
\end{enumerate}
\end{definition}

The existence and uniqueness of the trajectories generated by \eqref{dyn-syst} has been investigated in 
\cite{abbas-att-sv}. A careful look at the proofs in \cite{abbas-att-sv} reveals the fact that the convexity of $\psi$ is not used in 
the mentioned results on the existence, but the Lipschitz-continuity of its gradient.

We start our convergence analysis with the following technical result.

\begin{lemma}\label{diff-Phix} Let $x_0,v_0\in\R^n$ and $\lambda>0$ be such that $v_0\in\partial\phi(x_0)$. 
Let $(x,v):[0,+\infty)\rightarrow\R^n \times \R^n$ 
be the unique strong global solution of the dynamical system \eqref{dyn-syst}. Then the following statements are true: 
\begin{enumerate}
 \item[(i)] $\langle \dot x(t),\dot v(t)\rangle\geq 0$ for almost every $t\in [0,+\infty)$;  
 \item[(ii)] $\frac{d}{dt}\phi(x(t))=\langle \dot x(t),v(t)\rangle$ for almost every $t\in [0,+\infty)$.
 \end{enumerate}
\end{lemma}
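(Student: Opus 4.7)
The plan is to prove the two statements in sequence, using the convexity of $\phi$ (hence monotonicity of $\partial\phi$) for (i), and Lemma~\ref{diff-brezis} (Br\'ezis) for (ii). Both arguments exploit that, by the existence theory from \cite{abbas-att-sv}, the strong global solution $(x,v)$ is locally absolutely continuous on $[0,+\infty)$ with $\dot x,\dot v\in L^2_{\loc}([0,+\infty),\R^n)$, so that on every compact interval $[0,T]$ the hypotheses needed below are automatic.

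For (i), the idea is to differentiate the monotonicity inequality of $\partial\phi$. Since $\phi$ is convex, $\partial\phi$ is monotone, so the inclusion $v(t)\in\partial\phi(x(t))$ gives, for every $s,t\geq 0$,
\[
\langle v(t)-v(s),\,x(t)-x(s)\rangle\geq 0.
\]
Fix any $t$ at which both $x$ and $v$ are differentiable (such $t$ form a set of full measure in $[0,+\infty)$, by Remark~\ref{rem-abs-cont}(a)). For $h\neq 0$, replace $s$ by $t+h$, divide by $h^2$, and let $h\to 0$; the difference quotients converge to $\dot x(t)$ and $\dot v(t)$, and the sign of $h^2$ is always positive, yielding $\langle \dot v(t),\dot x(t)\rangle\geq 0$.

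For (ii), the plan is to invoke Lemma~\ref{diff-brezis} on an arbitrary compact interval $[0,T]$, with $f=\phi$ and $\xi=v$. The function $x$ is absolutely continuous on $[0,T]$, continuous and therefore bounded on $[0,T]$, hence in $L^2([0,T],\R^n)$; by the properties of the strong solution, $\dot x\in L^2([0,T],\R^n)$ as well. Likewise $v$ is continuous on $[0,T]$, hence in $L^2([0,T],\R^n)$, and by (ii) of Definition~\ref{str-sol} it satisfies $v(t)\in\partial\phi(x(t))$ for every $t\in[0,T]$, so in particular $x(t)\in\dom\phi$ pointwise. Lemma~\ref{diff-brezis} then yields that $t\mapsto\phi(x(t))$ is absolutely continuous on $[0,T]$ and that, for a.e.\ $t\in[0,T]$ with $x(t)\in\dom\partial\phi$,
\[
\frac{d}{dt}\phi(x(t))=\langle \dot x(t),h\rangle\quad\forall h\in\partial\phi(x(t)).
\]
Choosing $h=v(t)\in\partial\phi(x(t))$ yields the desired identity on $[0,T]$; since $T>0$ was arbitrary, the identity holds for almost every $t\in[0,+\infty)$.

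I expect the only subtlety to be the verification of the $L^2$ hypotheses required by Lemma~\ref{diff-brezis}; this is not hard because the existence theorem in \cite{abbas-att-sv} already guarantees $\dot x,\dot v\in L^2_{\loc}$, but it is worth flagging, since without this regularity the Br\'ezis derivative formula is not directly available. The monotonicity-plus-differentiation trick in (i) is standard and presents no real obstruction.
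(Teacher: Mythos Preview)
Your proposal is correct and follows essentially the same approach as the paper: for (i) you exploit the monotonicity of $\partial\phi$ and pass to the limit in the difference quotient (which is exactly the content of \cite[Proposition~3.1]{att-sv2011} cited by the authors), and for (ii) you apply Lemma~\ref{diff-brezis} with $\xi=v$, as the paper does by referring to \cite[relation~(51)]{abbas-att-sv}. Your write-up is in fact more detailed than the paper's own proof, which consists mainly of these two citations; your flagging of the $L^2_{\loc}$ regularity of $\dot x$ needed for Lemma~\ref{diff-brezis} is appropriate and is indeed supplied by the existence theory in \cite{abbas-att-sv}.
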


\begin{proof} (i) See \cite[Proposition 3.1]{att-sv2011}. The proof relies on the first relation in \eqref{dyn-syst} and 
the monotonicity of the convex subdifferential. 

(ii) The proof makes use of Lemma \ref{diff-brezis}. This relation has been already stated in \cite[relation (51)]{abbas-att-sv} without making use in its proof of the convexity of $\psi$. 
\end{proof}

\begin{lemma}\label{l-decr} Let $x_0,v_0\in\R^n$ and $\lambda>0$ be such that 
$v_0\in\partial\phi(x_0)$. Let $(x,v):[0,+\infty)\rightarrow\R^n \times \R^n$ 
be the unique strong global solution of the dynamical system \eqref{dyn-syst}. Suppose that $\phi+\psi$ is bounded from below. 
Then the following statements are true: 
\begin{enumerate}
\item [(i)] $\frac{d}{dt}(\phi+\psi)(x(t))+\lambda\|\dot x(t)\|^2+\langle \dot x(t),\dot v(t)\rangle=0$
for almost every $t\geq 0$;
\item [(ii)] $\dot x,\dot v, v+\nabla\psi(x)\in L^2([0,+\infty);\R^n)$, $\langle \dot x(\cdot),\dot v(\cdot)\rangle\in L^1([0,+\infty);\R)$ 
and $\lim_{t\rightarrow+\infty}\dot x(t)= \ \ $ $\lim_{t\rightarrow+\infty}\dot v(t)= \lim_{t\rightarrow+\infty}\big(v(t)+\nabla\psi(x(t))\big)=0$;
\item [(iii)] $\exists\lim_{t\rightarrow+\infty}(\phi+\psi)\big(x(t)\big)\in\R$.
\end{enumerate}
\end{lemma}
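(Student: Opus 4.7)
My approach breaks into three steps matching (i)--(iii). For (i), the plan is to apply Lemma~\ref{diff-Phix}(ii) to compute $\tfrac{d}{dt}\phi(x(t))=\langle\dot x(t),v(t)\rangle$, combine it with the classical chain rule $\tfrac{d}{dt}\psi(x(t))=\langle\dot x(t),\nabla\psi(x(t))\rangle$ (valid since $\psi\in C^1$ and $x$ is locally absolutely continuous), sum the two identities, and substitute $v+\nabla\psi(x)=-\lambda\dot x-\dot v$ from the dynamics to arrive at
\begin{equation*}
\tfrac{d}{dt}(\phi+\psi)(x(t))=\langle\dot x(t),v(t)+\nabla\psi(x(t))\rangle=-\lambda\|\dot x(t)\|^2-\langle\dot x(t),\dot v(t)\rangle,
\end{equation*}
which is (i).

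Once (i) is available, (iii) and the easy halves of (ii) follow quickly. By Lemma~\ref{diff-Phix}(i) we have $\langle\dot x,\dot v\rangle\geq 0$, so (i) gives $\tfrac{d}{dt}(\phi+\psi)(x(t))\leq -\lambda\|\dot x(t)\|^2\leq 0$; thus $t\mapsto(\phi+\psi)(x(t))$ is nonincreasing and, being bounded below by hypothesis, converges, proving (iii). Integrating (i) on $[0,T]$ and sending $T\to+\infty$ yields
\begin{equation*}
\lambda\int_0^{+\infty}\|\dot x(t)\|^2\,dt+\int_0^{+\infty}\langle\dot x(t),\dot v(t)\rangle\,dt\leq (\phi+\psi)(x_0)-\lim_{t\to+\infty}(\phi+\psi)(x(t))<+\infty,
\end{equation*}
so that, both integrands being nonnegative, $\dot x\in L^2$ and $\langle\dot x,\dot v\rangle\in L^1$.

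The main obstacle is controlling $\dot v$ and $v+\nabla\psi(x)$, which are invisible to the primary energy. I will introduce the secondary Lyapunov function $F(t):=\tfrac{1}{2}\|v(t)+\nabla\psi(x(t))\|^2$. This is locally absolutely continuous because $v$ is by definition of the solution and, by Remark~\ref{rem-abs-cont}(b), $\nabla\psi\circ x$ is locally absolutely continuous with $\|\tfrac{d}{dt}\nabla\psi(x(t))\|\leq L\|\dot x(t)\|$. Differentiating $F$, substituting $\dot v=-\lambda\dot x-(v+\nabla\psi(x))$ from the ODE, and applying Cauchy--Schwarz together with Young's inequality should yield an estimate of the shape
\begin{equation*}
\tfrac{d}{dt}F(t)\leq -F(t)+C\|\dot x(t)\|^2
\end{equation*}
for a constant $C>0$ depending only on $\lambda$ and $L$. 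Integrating this over $[0,T]$ and using $\dot x\in L^2$ will give $F\in L^1([0,+\infty))$, i.e.\ $v+\nabla\psi(x)\in L^2$; Lemma~\ref{fejer-cont2} applied to $F$ with $G=C\|\dot x\|^2\in L^1$ then delivers $F(t)\to 0$, hence $v+\nabla\psi(x)\to 0$. To close, the ODE itself shows $\dot v=-\lambda\dot x-(v+\nabla\psi(x))\in L^2$, and expanding
\begin{equation*}
\|v(t)+\nabla\psi(x(t))\|^2=\lambda^2\|\dot x(t)\|^2+2\lambda\langle\dot x(t),\dot v(t)\rangle+\|\dot v(t)\|^2,
\end{equation*}
whose three summands on the right-hand side are all nonnegative by Lemma~\ref{diff-Phix}(i), forces $\dot x(t)\to 0$ and $\dot v(t)\to 0$ since the left-hand side tends to $0$.
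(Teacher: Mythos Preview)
Your proposal is correct and follows essentially the same route as the paper: (i) comes from taking the inner product of the second equation of \eqref{dyn-syst} with $\dot x$ and invoking Lemma~\ref{diff-Phix}(ii); (iii) follows from (i), Lemma~\ref{diff-Phix}(i) and Lemma~\ref{fejer-cont1}; and the heart of (ii) is the differentiation of the secondary Lyapunov function $F=\tfrac{1}{2}\|v+\nabla\psi(x)\|^2$, followed by Lemma~\ref{fejer-cont2} and the same expansion of $\|\lambda\dot x+\dot v\|^2$ to conclude $\dot x,\dot v\to 0$. The only cosmetic difference lies in which form of the ODE is substituted into $F'$: the paper replaces $v+\nabla\psi(x)$ by $-\lambda\dot x-\dot v$ to obtain $\tfrac{d}{dt}F+\tfrac{3}{4}\|\dot v\|^2\leq L(\lambda+L)\|\dot x\|^2$ (yielding $\dot v\in L^2$ first, then $v+\nabla\psi(x)\in L^2$ via the ODE), whereas you replace $\dot v$ by $-\lambda\dot x-(v+\nabla\psi(x))$ to obtain $\tfrac{d}{dt}F\leq -F+C\|\dot x\|^2$ (yielding $F\in L^1$ first, then $\dot v\in L^2$ via the ODE).
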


\begin{proof} (i) The statement follows by inner multiplying the both sides of the second relation in \eqref{dyn-syst} by $\dot x(t)$ and by taking afterwards into consideration Lemma \ref{diff-Phix}(ii). 

(ii) After integrating the relation (i) and by taking into account that $\phi+\psi$ is bounded from below, we easily derive 
$\dot x \in L^2([0,+\infty);\R^n)$ and $\langle \dot x(\cdot),\dot v(\cdot)\rangle\in L^1([0,+\infty);\R)$ 
(see also Lemma \ref{diff-Phix}(i)). 
Further, by using the second relation in \eqref{dyn-syst}, Remark \ref{rem-abs-cont}(b) and 
Lemma \ref{diff-Phix}(i), we obtain for almost every $t\geq 0$: 
\begin{eqnarray*}\frac{d}{dt}\left(\frac{1}{2}\left\|v(t)+\nabla\psi(x(t))\right\|^2\right) 
& = & \left\langle \dot v(t)+\frac{d}{dt}\nabla\psi(x(t)),v(t)+\nabla\psi(x(t)) \right\rangle\\
& = & \left\langle \dot v(t)+\frac{d}{dt}\nabla\psi(x(t)),-\lambda\dot x(t)-\dot v(t) \right\rangle\\
& = &-\lambda\langle \dot v(t),\dot x(t)\rangle-\|\dot v(t)\|^2-\lambda\left\langle \frac{d}{dt}\nabla\psi(x(t)), \dot x(t)\right\rangle\\
&   & -\lambda\left\langle \frac{d}{dt}\nabla\psi(x(t)), \dot v(t)\right\rangle\\
& \leq & -\|\dot v(t)\|^2-\lambda\left\langle \frac{d}{dt}\nabla\psi(x(t)), \dot x(t)\right\rangle
-\lambda\left\langle \frac{d}{dt}\nabla\psi(x(t)), \dot v(t)\right\rangle\\
& \leq & -\|\dot v(t)\|^2+\lambda L\|\dot x(t)\|^2+L\|\dot x(t)\|\cdot\|\dot v(t)\|\\
& \leq & -\|\dot v(t)\|^2+\lambda L\|\dot x(t)\|^2+L^2\|\dot x(t)\|^2+\frac{1}{4}\|\dot v(t)\|^2,
\end{eqnarray*}
hence 
\begin{equation}\label{ineq} \frac{d}{dt}\left(\frac{1}{2}\left\|v(t)+\nabla\psi(x(t))\right\|^2\right)+\frac{3}{4}\|\dot v(t)\|^2
\leq  L(\lambda+L)\|\dot x(t)\|^2.
\end{equation}
Since $\dot x\in L^2([0,+\infty);\R^n)$, a simple integration argument yields that $\dot v\in L^2([0,+\infty);\R^n)$. Considering 
the second equation in \eqref{dyn-syst}, we further obtain that $v+\nabla\psi(x)\in L^2([0,+\infty);\R^n)$. This fact combined with 
Lemma \ref{fejer-cont2} and \eqref{ineq} implies that $\lim_{t\rightarrow+\infty}\big(v(t)+\nabla\psi(x(t))\big)=0$. 
From the second equation in \eqref{dyn-syst} we obtain 
\begin{equation}\label{lim0} \lim_{t\rightarrow+\infty} \lambda \dot x(t)+\dot v(t) =0.
\end{equation}
Further, from Lemma \ref{diff-Phix}(i) we have for almost every $t\geq 0$
\begin{eqnarray*}\|\dot v(t)\|^2 &\leq & \lambda^2\|\dot x(t)\|^2+2\lambda\langle \dot x(t),\dot v(t)\rangle+\|\dot v(t)\|^2 = \|\lambda \dot x(t)+\dot v(t)\|^2,
\end{eqnarray*}
hence from \eqref{lim0} we get $\lim_{t\rightarrow+\infty}\dot v(t)=0$. Combining this with \eqref{lim0} we conclude 
that $\lim_{t\rightarrow+\infty}\dot x(t)=0$. 

(iii) From (i) and Lemma \ref{diff-Phix}(i) it follows that 
\begin{equation}\label{obj-decr}\frac{d}{dt}(\phi+\psi)(x(t))\leq 0\end{equation}
for almost every $t\geq 0$. The conclusion follows by applying Lemma \ref{fejer-cont1}. 
\end{proof}

\begin{lemma}\label{lim-crit} Let $x_0,v_0\in\R^n$ and $\lambda>0$ be such that 
$v_0\in\partial\phi(x_0)$. Let $(x,v):[0,+\infty)\rightarrow\R^n \times \R^n$ 
be the unique strong global solution of the dynamical system \eqref{dyn-syst}. Suppose that $\phi+\psi$ is bounded from below.  
Let $(t_k)_{k\in\N}$ be a sequence such that $t_k\rightarrow+\infty$ and $x(t_k)\rightarrow\ol x \in \R^n\mbox{ as }k\rightarrow+\infty$. 
Then $$0\in\partial_L(\phi+\psi)(\ol x).$$
\end{lemma}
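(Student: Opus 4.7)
The plan is to use the closedness criterion for the graph of the limiting subdifferential stated in Section~\ref{sec1}, together with the sum rule $\partial_L(\phi+\psi)(\bar x) = \partial_L\phi(\bar x) + \nabla\psi(\bar x)$. Since $\phi$ is convex, $\partial_L\phi$ coincides with the convex subdifferential $\partial\phi$, so $v(t_k) \in \partial_L\phi(x(t_k))$ for every $k$. It suffices therefore to identify the limit of $v(t_k)$, and to prove that $\phi(x(t_k)) \to \phi(\bar x)$.

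First I would identify the limit of $v(t_k)$. By Lemma~\ref{l-decr}(ii), $v(t) + \nabla\psi(x(t)) \to 0$ as $t \to +\infty$. Since $\nabla\psi$ is (Lipschitz) continuous and $x(t_k) \to \bar x$, we have $\nabla\psi(x(t_k)) \to \nabla\psi(\bar x)$, and consequently
\begin{equation*}
v(t_k) \longrightarrow -\nabla\psi(\bar x) \quad \text{as } k \to +\infty.
\end{equation*}

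The main obstacle is to establish the convergence $\phi(x(t_k)) \to \phi(\bar x)$, which is needed to apply the closedness criterion. Lower semicontinuity of $\phi$ gives immediately
\begin{equation*}
\phi(\bar x) \leq \liminf_{k \to +\infty} \phi(x(t_k)).
\end{equation*}
For the reverse inequality I would exploit the convexity of $\phi$. Since $v(t_k) \in \partial\phi(x(t_k))$, the subgradient inequality yields
\begin{equation*}
\phi(x(t_k)) \leq \phi(\bar x) + \langle v(t_k), x(t_k) - \bar x\rangle.
\end{equation*}
The sequence $(v(t_k))_{k \in \N}$ is bounded (it converges) and $x(t_k) - \bar x \to 0$, so the inner product tends to zero and
\begin{equation*}
\limsup_{k \to +\infty} \phi(x(t_k)) \leq \phi(\bar x).
\end{equation*}
Combining both bounds gives $\phi(x(t_k)) \to \phi(\bar x)$.

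Having established $x(t_k) \to \bar x$, $v(t_k) \to -\nabla\psi(\bar x)$, $v(t_k) \in \partial_L\phi(x(t_k))$ and $\phi(x(t_k)) \to \phi(\bar x)$, the closedness criterion for the graph of the limiting subdifferential yields $-\nabla\psi(\bar x) \in \partial_L\phi(\bar x)$. Finally, the sum rule for the limiting subdifferential (applicable since $\psi$ is continuously differentiable) gives
\begin{equation*}
0 = -\nabla\psi(\bar x) + \nabla\psi(\bar x) \in \partial_L\phi(\bar x) + \nabla\psi(\bar x) = \partial_L(\phi+\psi)(\bar x),
\end{equation*}
which is the desired conclusion.
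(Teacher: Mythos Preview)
Your proof is correct and follows essentially the same route as the paper: both use Lemma~\ref{l-decr}(ii) to identify $\lim v(t_k)=-\nabla\psi(\ol x)$, then the subgradient inequality for $\phi$ together with lower semicontinuity to obtain convergence of the function values, and finally the closedness criterion for the limiting subdifferential. The only cosmetic difference is that the paper applies the sum rule first (working with $v(t_k)+\nabla\psi(x(t_k))\in\partial_L(\phi+\psi)(x(t_k))$ and verifying $(\phi+\psi)(x(t_k))\to(\phi+\psi)(\ol x)$) and then invokes closedness for $\phi+\psi$, whereas you invoke closedness for $\phi$ and apply the sum rule afterward.
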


\begin{proof} From the first relation in \eqref{dyn-syst} and the subdifferential sum rule of the limiting subdifferential we derive for any $k \in \N$
\begin{equation}\label{in-subdiff}v(t_k)+\nabla\psi(x(t_k))\in \partial \phi(x(t_k))+\nabla\psi(x(t_k))=\partial_L(\phi+\psi)(x(t_k)).
\end{equation}
Further, we have \begin{equation}\label{x-olx}x(t_k)\rightarrow\ol x \mbox{ as }k\rightarrow+\infty\end{equation} and 
(see Lemma \ref{l-decr}(ii)) \begin{equation}\label{limo-v}v(t_k)+\nabla\psi(x(t_k))\rightarrow 0 \mbox{ as }k\rightarrow+\infty.\end{equation}
According to the closedness property of the limiting subdifferential, the proof is complete as soon as we show that 
\begin{equation}\label{lim-func} (\phi+\psi)(x(t_k))\rightarrow (\phi+\psi)(\ol x) \mbox{ as }k\rightarrow+\infty.
\end{equation}
From \eqref{x-olx}, \eqref{limo-v} and the continuity of $\nabla\psi$ we get 
\begin{equation}\label{lim-vt} v(t_k)\rightarrow -\nabla\psi(\ol x) \mbox{ as }k\rightarrow+\infty. 
\end{equation}
Further, since $v({t_k})\in\partial\phi(x(t_k))$, we have 
$$\phi(\ol x)\geq \phi(x(t_k))+\langle v(t_k),\ol x-x(t_k)\rangle \ \forall k \in \N.$$
Combining this with \eqref{x-olx} and \eqref{lim-vt} we derive 
$$\limsup_{k\rightarrow+\infty}\phi(x(t_k))\leq \phi(\ol x). $$
A direct consequence of the lower semicontinuity of $\phi$ is the relation 
$$\lim_{k\rightarrow+\infty}\phi(x(t_k))= \phi(\ol x),$$
which combined with \eqref{x-olx} and the continuity of $\psi$ yields \eqref{lim-func}. 
\end{proof}

We define the {\it limit set of $x$} as
$$\omega (x):=\{\ol x\in\R^n:\exists t_k\rightarrow+\infty \mbox{ such that }x(t_k)\rightarrow\ol x \mbox{ as }k\rightarrow+\infty\}.$$
We use also the {\it distance function} to a set, defined for $A\subseteq\R^n$ as $\dist(x,A)=\inf_{y\in A}\|x-y\|$  
for all $x\in\R^n$.

\begin{lemma}\label{l} Let $x_0,v_0\in\R^n$ and $\lambda>0$ be such that 
$v_0\in\partial\phi(x_0)$. Let $(x,v):[0,+\infty)\rightarrow\R^n \times \R^n$ 
be the unique strong global solution of the dynamical system \eqref{dyn-syst}. Suppose that $\phi+\psi$ is bounded from below and 
$x$ is bounded. Then the following statements are true:
\begin{itemize}
\item[(i)] $\omega(x)\subseteq \crit(\phi+\psi)$; 
\item[(ii)] $\omega(x)$ is nonempty, compact and connected;
\item[(iii)] $\lim_{t\to+\infty}\dist\big(x(t),\omega(x)\big)=0$;
\item[(iv)] $\phi+\psi$ is finite and constant on $\omega(x)$. 
\end{itemize}
\end{lemma}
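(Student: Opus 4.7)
Part (i) follows immediately from Lemma \ref{lim-crit}: for any $\ol x \in \omega(x)$ there is a sequence $t_k\to+\infty$ with $x(t_k)\to\ol x$, and the lemma yields $0\in\partial_L(\phi+\psi)(\ol x)$.

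For (iv), I would start from Lemma \ref{l-decr}(iii), which gives the existence (and finiteness) of $l:=\lim_{t\to+\infty}(\phi+\psi)(x(t))\in\R$. Given $\ol x\in\omega(x)$ and $t_k\to+\infty$ with $x(t_k)\to\ol x$, the plan is to reuse the argument from the proof of Lemma \ref{lim-crit}: by Lemma \ref{l-decr}(ii) and continuity of $\nabla\psi$, $v(t_k)\to-\nabla\psi(\ol x)$; the subgradient inequality $\phi(\ol x)\ge\phi(x(t_k))+\langle v(t_k),\ol x - x(t_k)\rangle$ then forces $\limsup_{k}\phi(x(t_k))\le\phi(\ol x)$, which combined with lower semicontinuity of $\phi$ yields $\phi(x(t_k))\to\phi(\ol x)$; continuity of $\psi$ upgrades this to $(\phi+\psi)(x(t_k))\to(\phi+\psi)(\ol x)$, so $(\phi+\psi)(\ol x)=l$, and this is independent of the chosen $\ol x$.

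Parts (ii) and (iii) are essentially classical facts about $\omega$-limit sets of bounded continuous curves, available here because $x$ is bounded and continuous. For (ii), I would write $\omega(x)=\bigcap_{s\ge 0}\overline{x([s,+\infty))}$; each set in this intersection is nonempty (the tail of the bounded trajectory has a convergent subsequence), compact (closed and bounded in $\R^n$), and connected as the closure of the continuous image of the connected set $[s,+\infty)$. The intersection of a decreasing family of nonempty compact connected subsets of $\R^n$ is then nonempty, compact, and connected. For (iii), I would argue by contradiction: if $\limsup_{t\to+\infty}\dist(x(t),\omega(x))>0$, pick $\e_0>0$ and $t_k\to+\infty$ with $\dist(x(t_k),\omega(x))\ge\e_0$, extract from the bounded sequence $\big(x(t_k)\big)_k$ a subsequence converging to some $\ol x$; by construction $\ol x\in\omega(x)$, contradicting $\dist(\ol x,\omega(x))\ge\e_0$.

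The main obstacle is (iv): ensuring that $\phi(x(t_k))$ truly converges to $\phi(\ol x)$, rather than only that lower semicontinuity gives one half of the inequality. The key device is the subgradient inequality coupled with the convergence $v(t_k)\to-\nabla\psi(\ol x)$ provided by Lemma \ref{l-decr}(ii); this is the same mechanism that underlies the proof of Lemma \ref{lim-crit}, and once it is invoked the rest of the statements reduce to bookkeeping over standard properties of $\omega$-limit sets.
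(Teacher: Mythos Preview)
Your proposal is correct and follows essentially the same route as the paper: (i) via Lemma~\ref{lim-crit}, (iv) via Lemma~\ref{l-decr}(iii) together with the convergence $(\phi+\psi)(x(t_k))\to(\phi+\psi)(\ol x)$ established in the proof of Lemma~\ref{lim-crit}, and (ii)--(iii) as standard $\omega$-limit set facts for bounded continuous trajectories. The only minor difference is that the paper defers (ii) to references (invoking $\lim_{t\to+\infty}\dot x(t)=0$ from Lemma~\ref{l-decr}(ii)), whereas your direct argument via $\omega(x)=\bigcap_{s\ge 0}\overline{x([s,+\infty))}$ uses only continuity and boundedness of $x$ and is slightly more self-contained.
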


\begin{proof} Statement (i) is a direct consequence of Lemma \ref{lim-crit}. 

Statement (ii) is a classical result from \cite{haraux}. We also refer the reader  to the proof of Theorem 4.1 in 
\cite{alv-att-bolte-red}, where it is shown that the properties of $\omega(x)$ of being nonempty, compact and connected 
are generic for bounded trajectories fulfilling  $\lim_{t\rightarrow+\infty}{\dot x(t)}=0$. 

Statement (iii) follows immediately since $\omega(x)$ is nonempty. 

(iv) According to Lemma \eqref{l-decr}(iii), there exists $\lim_{t\rightarrow+\infty}(\phi+\psi)\big(x(t)\big)\in\R$. Let us 
denote by $l\in\R$ this limit. Take $\ol x\in\omega(x)$. Then there exists $t_k\rightarrow+\infty$ such that 
$x(t_k)\rightarrow\ol x$ as $k\rightarrow+\infty$. From the proof of Lemma \ref{lim-crit} we have that 
$(\phi+\psi)(x(t_k))\rightarrow (\phi+\psi)(\ol x) \mbox{ as }k\rightarrow+\infty$, hence $(\phi+\psi)(\ol x)=l$.
\end{proof}

\begin{remark}\label{cond-x-bound} 
Suppose that $\phi+\psi$ is coercive, in other words,
$$\lim_{\|u\|\rightarrow+\infty}(\phi+\psi)(u)=+\infty.$$ 
Let $x_0,v_0\in\R^n$ and $\lambda>0$ be such that 
$v_0\in\partial\phi(x_0)$. Let $(x,v):[0,+\infty)\rightarrow\R^n \times \R^n$ 
be the unique strong global solution of the dynamical system \eqref{dyn-syst}. Then $\phi+\psi$ is bounded from below and  
$x$ is bounded.  

Indeed, since $\phi+\psi$ is a proper, lower semicontinuous and coercive function, it follows that 
$\inf_{u\in\R^n}[\phi(u)+\psi(u)]$ is finite and the infimum is attained. Hence $\phi+\psi$ is bounded from below. 
On the other hand, from \eqref{obj-decr} it follows
$$(\phi+\psi)(x(T))\leq 
  (\phi+\psi)(x_0)  \ \forall T \geq 0.$$
Since $\phi+\psi$ is coercive, the lower level sets of $\phi+\psi$ are bounded, hence the above inequality yields 
that $x$ is bounded. Notice that in this case $v$ is bounded too, due to the relation 
$\lim_{t\rightarrow+\infty}\big(v(t)+\nabla \psi(x(t))\big)=0$ (Lemma \ref{l-decr}(ii)) and the Lipschitz continuity of $\nabla\psi$.
\end{remark}

\section{Convergence of the trajectory when the objective function satisfies the Kurdyka-\L{}ojasiewicz property}\label{sec3}

In order to enforce the convergence of the whole trajectory $x(t)$ to a critical point of the objective function as 
$t \rightarrow +\infty$ more involved analytic features of the functions have to be considered. 

A crucial role in the asymptotic analysis of the dynamical system \eqref{dyn-syst} is played by the class of functions satisfying the {\it Kurdyka-\L{}ojasiewicz} property. For $\eta\in(0,+\infty]$, we denote by $\Theta_{\eta}$ the class of concave and continuous functions 
$\varphi:[0,\eta)\rightarrow [0,+\infty)$ such that $\varphi(0)=0$, $\varphi$ is continuously differentiable on $(0,\eta)$, continuous at $0$ and $\varphi'(s)>0$ for all 
$s\in(0, \eta)$. 

\begin{definition}\label{KL-property} \rm({\it Kurdyka-\L{}ojasiewicz property}) Let $f:\R^n\rightarrow\R\cup\{+\infty\}$ be a proper and lower semicontinuous 
function. We say that $f$ satisfies the {\it Kurdyka-\L{}ojasiewicz (KL) property} at $\ol x\in \dom\partial_L f=\{x\in\R^n:\partial_L f(x)\neq\emptyset\}$, if there exist $\eta \in(0,+\infty]$, a neighborhood $U$ of $\ol x$ and a function $\varphi\in \Theta_{\eta}$ such that for all $x$ in the 
intersection 
$$U\cap \{x\in\R^n: f(\ol x)<f(x)<f(\ol x)+\eta\}$$ the following inequality holds 
$$\varphi'(f(x)-f(\ol x))\dist(0,\partial_L f(x))\geq 1.$$
If $f$ satisfies the KL property at each point in $\dom\partial_L f$, then $f$ is called {\it KL function}. 
\end{definition}

The origins of this notion go back to the pioneering work of \L{}ojasiewicz \cite{lojasiewicz1963}, where it is proved that for a real-analytic function 
$f:\R^n\rightarrow\R$ and a critical point $\ol x\in\R^n$ (that is $\nabla f(\ol x)=0$), there exists $\theta\in[1/2,1)$ such that the function 
$|f-f(\ol x)|^{\theta}\|\nabla f\|^{-1}$ is bounded around $\ol x$. This corresponds to the situation when $\varphi(s)=Cs^{1-\theta}$ for
$C>0$. The result of 
\L{}ojasiewicz allows the interpretation of the KL property as a re-parametrization of the function values in order to avoid flatness around the 
critical points. Kurdyka \cite{kurdyka1998} extended this property to differentiable functions definable in o-minimal structures. 
Further extensions to the nonsmooth setting can be found in \cite{b-d-l2006, att-b-red-soub2010, b-d-l-s2007, b-d-l-m2010}. 

One of the remarkable properties of the KL functions is their ubiquity in applications (see \cite{b-sab-teb}). We refer the reader to 
\cite{b-d-l2006, att-b-red-soub2010, b-d-l-m2010, b-sab-teb, b-d-l-s2007, att-b-sv2013, attouch-bolte2009} and the references therein for more properties of the KL functions and illustrating examples. 

In the analysis below the following uniform KL property given in \cite[Lemma 6]{b-sab-teb} will be used.

\begin{lemma}\label{unif-KL-property} Let $\Omega\subseteq \R^n$ be a compact set and let $f:\R^n\rightarrow\R\cup\{+\infty\}$ be a proper 
and lower semicontinuous function. Assume that $f$ is constant on $\Omega$ and that it satisfies the KL property at each point of $\Omega$.   
Then there exist $\varepsilon,\eta >0$ and $\varphi\in \Theta_{\eta}$ such that for all $\ol x\in\Omega$ and all $x$ in the intersection 
\begin{equation}\label{int} \{x\in\R^n: \dist(x,\Omega)<\varepsilon\}\cap \{x\in\R^n: f(\ol x)<f(x)<f(\ol x)+\eta\}\end{equation} 
the inequality  \begin{equation}\label{KL-ineq}\varphi'(f(x)-f(\ol x))\dist(0,\partial_L f(x))\geq 1.\end{equation}
holds.
\end{lemma}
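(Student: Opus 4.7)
The plan is a compactness-plus-aggregation argument. Let $\ell$ denote the common value of $f$ on $\Omega$. For each $\ol x \in \Omega$ the pointwise KL property produces an open ball $B(\ol x, r_{\ol x})$, a level $\eta_{\ol x} > 0$, and a function $\varphi_{\ol x} \in \Theta_{\eta_{\ol x}}$ such that \eqref{KL-ineq} holds on $B(\ol x, r_{\ol x}) \cap \{x : \ell < f(x) < \ell + \eta_{\ol x}\}$. Since $\Omega$ is compact, extract a finite subcover $\{B(\ol x_i, r_{\ol x_i})\}_{i=1}^p$, and choose $\varepsilon > 0$ (a Lebesgue-number-type choice) so small that $\{x : \dist(x, \Omega) < \varepsilon\} \subseteq \bigcup_{i=1}^p B(\ol x_i, r_{\ol x_i})$. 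Set $\eta := \min_{1 \le i \le p} \eta_{\ol x_i} > 0$.

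Next I would build a single desingularizing function by summation: on $[0, \eta)$ define
\[
\varphi(s) := \sum_{i=1}^p \varphi_{\ol x_i}(s).
\]
Each summand is concave, continuous on $[0, \eta)$, continuously differentiable on $(0, \eta)$, vanishes at $0$, and has strictly positive derivative on $(0, \eta)$; these properties are stable under finite sums, so $\varphi \in \Theta_{\eta}$.

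To verify the uniform KL inequality, fix arbitrary $\ol x \in \Omega$ and $x$ in the intersection \eqref{int}. Pick $i \in \{1, \ldots, p\}$ with $x \in B(\ol x_i, r_{\ol x_i})$. Here the constancy assumption plays its decisive role: since $f(\ol x) = f(\ol x_i) = \ell$, the bounds $f(\ol x) < f(x) < f(\ol x) + \eta$ become $\ell < f(x) < \ell + \eta \le \ell + \eta_{\ol x_i}$, so $x$ lies in the individual KL-neighborhood associated with $\ol x_i$. The pointwise KL inequality then gives $\varphi_{\ol x_i}'(f(x) - \ell)\, \dist(0, \partial_L f(x)) \ge 1$, and because all $\varphi_{\ol x_j}'$ are nonnegative on $(0, \eta)$ we have $\varphi'(s) \ge \varphi_{\ol x_i}'(s)$, yielding \eqref{KL-ineq}.

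The argument is largely bookkeeping; the main delicate point I expect is exactly the step that leverages constancy of $f$ on $\Omega$ to reconcile the dependence on the reference point $\ol x$ with the dependence on the cover centers $\ol x_i$. Without $f \equiv \ell$ on $\Omega$, the arguments $f(x) - f(\ol x)$ and $f(x) - f(\ol x_i)$ would differ, the desingularizing sum would have mismatched inputs, and the finite aggregation trick would collapse.
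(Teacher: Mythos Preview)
The paper does not provide its own proof of this lemma; it simply quotes it from \cite[Lemma 6]{b-sab-teb}. Your proposal reproduces precisely the compactness argument given there: extract a finite subcover of $\Omega$ by the individual KL-neighborhoods, take $\eta$ to be the minimum of the finitely many level parameters, sum the desingularizing functions to obtain a single $\varphi\in\Theta_\eta$, and choose $\varepsilon$ so that the $\varepsilon$-tube around $\Omega$ is contained in the union of the cover balls (possible because that union is open and contains the compact set $\Omega$). The argument is correct, and your remark about the role of the constancy assumption---that it is exactly what allows one to replace $f(x)-f(\ol x)$ by $f(x)-f(\ol x_i)$ and thereby match the arguments of the summed desingularizing functions---identifies the only genuinely nontrivial step.
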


Due to some reasons outlined in Remark \ref{techn} below, we prove the convergence of the trajectory $x(t)$ generated by \eqref{dyn-syst} as $t \rightarrow +\infty$ under the assumption that $\phi:\R^n\to \R$ is convex and differentiable with $\rho^{-1}$-Lipschitz continuous gradient for $\rho>0.$ In these circumstances the dynamical system \eqref{dyn-syst} reads
\begin{equation}\label{dyn-syst2}\left\{
\begin{array}{ll}
v(t)= \nabla\phi(x(t))\\
\lambda\dot x(t)+\dot v(t)+\nabla\phi(x(t))+\nabla \psi(x(t))=0\\
x(0)=x_0, v(0)=v_0=\nabla\phi(x_0),
\end{array}\right.\end{equation}
where $x_0,v_0\in\R^n$ and $\lambda>0$.

\begin{remark}\rm We notice that we do no require second order assumptions for $\phi$. However, we want to notice that if $\phi$ is a twice continuously differentiable function, then the  dynamical system  \eqref{dyn-syst2} can be equivalently written as 
\begin{equation}\label{dyn-syst2-c2}\left\{
\begin{array}{ll}
\lambda\dot x(t)+\nabla^2\phi(x(t))(\dot x(t))+\nabla\phi(x(t))+\nabla \psi(x(t))=0\\
x(0)=x_0, v(0)=v_0=\nabla\phi(x_0),
\end{array}\right.\end{equation}
where $x_0,v_0\in\R^n$ and $\lambda>0$. This is a differential equation with a Hessian-driven damping term. We refer the 
reader to \cite{alv-att-bolte-red} and \cite{att-mainge-red2012} for more insights into dynamical systems with Hessian-driven damping terms and for motivations for considering them.
Moreover, as in \cite{att-mainge-red2012}, the driving forces have been split as $\nabla\phi+\nabla\psi$, where $\nabla\psi$ stands for classical smooth driving forces and $\nabla\phi$ incorporates the contact forces. 
\end{remark}

In this context, an improved version of Lemma \ref{diff-Phix}(i) can be stated. 

\begin{lemma}\label{diff-Phix2} Let $x_0,v_0\in\R^n$ and $\lambda>0$ be such that $v_0=\nabla\phi(x_0)$. 
Let $(x,v):[0,+\infty)\rightarrow\R^n \times \R^n$ 
be the unique strong global solution of the dynamical system \eqref{dyn-syst2}. Then: 
\begin{equation}\label{impr}\langle \dot x(t),\dot v(t)\rangle\geq \rho\|\dot v(t)\|^2 \mbox{ for almost every }t\in [0,+\infty).\end{equation}
\end{lemma}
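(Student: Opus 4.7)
The plan is to upgrade the monotonicity argument behind Lemma \ref{diff-Phix}(i) into a cocoercivity argument, which is exactly what the new smoothness hypothesis on $\phi$ buys us. Since $\phi$ is convex and $\nabla\phi$ is $\rho^{-1}$-Lipschitz continuous, the Baillon--Haddad theorem asserts that $\nabla\phi$ is $\rho$-cocoercive, i.e.
$$\langle \nabla\phi(x) - \nabla\phi(y), x - y \rangle \geq \rho \|\nabla\phi(x) - \nabla\phi(y)\|^2 \quad \forall x, y \in \R^n.$$
This is the precise quantitative strengthening of monotonicity that is responsible for the quadratic lower bound \eqref{impr}: monotonicity alone yields only the bound $\geq 0$ obtained in Lemma \ref{diff-Phix}(i).

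Next, I would exploit the regularity of the trajectory in order to convert this pointwise inequality into a statement about the derivatives. By Definition \ref{str-sol}(i) both $x$ and $v$ are locally absolutely continuous, hence differentiable at almost every $t \in [0,+\infty)$ by Remark \ref{rem-abs-cont}(a). Fix such a $t$. Since $v(s) = \nabla\phi(x(s))$ for every $s \geq 0$, applying the cocoercivity inequality to the pair $x(t+h)$ and $x(t)$ gives, for every $h > 0$,
$$\langle x(t+h) - x(t), v(t+h) - v(t) \rangle \geq \rho \|v(t+h) - v(t)\|^2.$$
Dividing both sides by $h^2$ and letting $h \to 0^+$, the difference quotients $\bigl(x(t+h)-x(t)\bigr)/h$ and $\bigl(v(t+h)-v(t)\bigr)/h$ converge to $\dot x(t)$ and $\dot v(t)$ respectively at the chosen differentiability point, so passing to the limit yields
$$\langle \dot x(t), \dot v(t) \rangle \geq \rho \|\dot v(t)\|^2,$$
which is exactly \eqref{impr}. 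Since this argument is valid at every point of a set of full Lebesgue measure, the inequality holds for almost every $t \in [0,+\infty)$.

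The main (and essentially only) obstacle is recognizing that the appropriate tool here is the Baillon--Haddad theorem; once the upgrade from monotonicity to $\rho$-cocoercivity of $\nabla\phi$ is in place, the finite-difference limit is the same standard passage used in \cite[Proposition 3.1]{att-sv2011} and already invoked in Lemma \ref{diff-Phix}(i). In particular, no second-order regularity of $\phi$ enters the argument, in agreement with the remark preceding the lemma.
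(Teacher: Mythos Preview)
Your proof is correct and follows essentially the same route as the paper: apply the Baillon--Haddad theorem to obtain $\rho$-cocoercivity of $\nabla\phi$, write the cocoercivity inequality for the pair $x(t+h),\,x(t)$, divide by $h^2$, and let $h\to 0^+$. The paper's version is slightly terser but the argument is identical.
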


\begin{proof} Take an arbitrary $\delta>0$. For $t\geq 0$ we have 
\begin{eqnarray}\label{delta} \langle v(t+\delta)-v(t), x(t+\delta)-x(t)\rangle
& = & \langle \nabla\phi(x(t+\delta))-\nabla\phi (x(t)), x(t+\delta)-x(t)\rangle\nonumber\\
& \geq & \rho\|\nabla\phi(x(t+\delta))-\nabla\phi (x(t))\|^2\nonumber\\
& = & \rho\|v(t+\delta)-v(t)\|^2,
\end{eqnarray}
where the inequality follows from the Baillon-Haddad Theorem \cite[Corollary 18.16]{bauschke-book}. 
The conclusion follows by dividing \eqref{delta} by $\delta^2$ and by taking the limit as $\delta$ converges to zero from above. 
\end{proof}

We are now in the position to prove the convergence of the trajectories generated by \eqref{dyn-syst2}. 

\begin{theorem}\label{conv-kl} Let $x_0,v_0\in\R^n$ and $\lambda>0$ be such that 
$v_0=\nabla\phi(x_0)$. Let $(x,v):[0,+\infty)\rightarrow\R^n \times \R^n$ 
be the unique strong global solution of the dynamical system \eqref{dyn-syst2}. Suppose that $\phi+\psi$ is a KL function which 
is bounded from below and $x$ is bounded. Then the following statements are true:
\begin{itemize}\item[(i)] $\dot x,\dot v, \nabla\phi(x)+\nabla\psi(x)\in L^2([0,+\infty);\R^n)$, $\langle \dot x(\cdot),\dot v(\cdot)\rangle\in L^1([0,+\infty);\R)$ 
and \\$\lim_{t\rightarrow+\infty}\dot x(t)=\lim_{t\rightarrow+\infty}\dot v(t)= \lim_{t\rightarrow+\infty}\big(\nabla\phi(x(t))+\nabla\psi(x(t))\big)=0$;
\item[(ii)] there exists $\ol x\in\crit(\phi+\psi)$ (that is $\nabla(\phi+\psi)(\ol x)=0$) such that 
$\lim_{t\rightarrow+\infty}x(t)=\ol x$.
\end{itemize}
\end{theorem}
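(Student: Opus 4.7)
The plan is to apply Lemma \ref{l-decr} for part (i) and then run the standard Kurdyka--\L{}ojasiewicz--Lyapunov analysis for part (ii). Since we are now in the setting where $v(t) = \nabla\phi(x(t))$, Lemma \ref{l-decr}(ii) delivers all of (i) directly, with no further work needed beyond noting that the convexity and Lipschitz smoothness of $\phi$ are stronger than what was required there.

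For part (ii), I would first apply Lemma \ref{l} to obtain that $\omega(x)$ is nonempty, compact, connected, contained in $\crit(\phi+\psi)$, and that $\phi+\psi$ equals some constant $\ell\in\R$ on $\omega(x)$, with $\lim_{t\to\infty}(\phi+\psi)(x(t)) = \ell$ and $\dist(x(t),\omega(x))\to 0$. Setting $H(t) := (\phi+\psi)(x(t)) - \ell \geq 0$, the degenerate case $H(t_0) = 0$ for some $t_0$ is dispatched via the refined descent estimate
\[
\frac{d}{dt}(\phi+\psi)(x(t)) \leq -\lambda\|\dot x(t)\|^2 - \rho\|\dot v(t)\|^2,
\]
obtained by combining Lemma \ref{l-decr}(i) with the improved inequality from Lemma \ref{diff-Phix2}, which forces $\dot x \equiv 0$ past $t_0$ and so $x$ is eventually constant.

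In the non-degenerate case $H(t)>0$ for all $t$, I would invoke the uniform KL property (Lemma \ref{unif-KL-property}) applied to $\Omega = \omega(x)$, producing $\varepsilon,\eta>0$ and $\varphi\in\Theta_\eta$, and then argue that for all sufficiently large $t \geq T_0$ the point $x(t)$ lies in the intersection \eqref{int}. Since $v = \nabla\phi(x)$ and $\nabla\phi(x)+\nabla\psi(x) = -\lambda\dot x-\dot v$ by \eqref{dyn-syst2}, the sum rule identifies $\partial_L(\phi+\psi)(x(t))$ with $\{-\lambda\dot x(t)-\dot v(t)\}$, so the KL inequality reads $\varphi'(H(t))\,\|\lambda\dot x(t)+\dot v(t)\| \geq 1$. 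The decisive Lyapunov computation is
\[
-\frac{d}{dt}\varphi(H(t)) = \varphi'(H(t))\bigl(\lambda\|\dot x(t)\|^2+\langle\dot x(t),\dot v(t)\rangle\bigr) \geq \varphi'(H(t))\bigl(\lambda\|\dot x(t)\|^2+\rho\|\dot v(t)\|^2\bigr);
\]
combining with the KL bound and the elementary estimates $\|\lambda\dot x+\dot v\|\leq \max(\lambda,1)(\|\dot x\|+\|\dot v\|)$ and $\lambda\|\dot x\|^2+\rho\|\dot v\|^2 \geq \tfrac12\min(\lambda,\rho)(\|\dot x\|+\|\dot v\|)^2$ would yield a constant $c>0$ with $-\tfrac{d}{dt}\varphi(H(t)) \geq c(\|\dot x(t)\|+\|\dot v(t)\|)$ almost everywhere on $[T_0,+\infty)$. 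A single integration together with $\varphi\geq 0$ then gives $\dot x \in L^1$, so $x$ has finite length and converges to some $\ol x\in\omega(x)\subseteq\crit(\phi+\psi)$.

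The main obstacle I anticipate is the bookkeeping around the KL neighborhood: one must check that $x(t)$ enters the set \eqref{int} and stays there, and that $H$ remains strictly positive, so that the chain rule for $\varphi\circ H$ and the division by $\|\lambda\dot x+\dot v\|$ remain legitimate throughout the integration. Notice also that the improved estimate $\langle \dot x,\dot v\rangle\geq \rho\|\dot v\|^2$ of Lemma \ref{diff-Phix2} is essential here: without it, the descent controls only $\|\dot x\|$, whereas the denominator $\|\lambda\dot x+\dot v\|$ coming out of the KL inequality contains $\|\dot v\|$ as well, and the comparison between numerator and denominator could not be closed. This is the precise structural reason why the proof is carried out for smooth $\phi$ with cocoercive gradient rather than for general convex $\phi$.
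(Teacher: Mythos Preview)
Your proposal is correct and follows essentially the same approach as the paper: Lemma~\ref{l-decr} for part~(i), then the case split on whether $(\phi+\psi)(x(t))$ ever attains the limit value, the uniform KL property from Lemma~\ref{unif-KL-property} on $\Omega=\omega(x)$, the chain rule for $\varphi\circ H$ combined with Lemma~\ref{l-decr}(i) and the cocoercivity estimate of Lemma~\ref{diff-Phix2}, and the same elementary comparison of $\lambda\|\dot x\|^2+\rho\|\dot v\|^2$ against $(\lambda\|\dot x\|+\|\dot v\|)(\|\dot x\|+\|\dot v\|)$ to reach $\dot x,\dot v\in L^1$. Your closing remark on why Lemma~\ref{diff-Phix2} is indispensable matches exactly the paper's Remark~\ref{techn}.
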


\begin{proof} According to Lemma \ref{l}, we can choose an element $\ol x\in\crit (\phi+\psi)$ (that is $\nabla(\phi+\psi)(\ol x)=0$) 
such that $\ol x\in \omega (x)$. According to Lemma \ref{l-decr}(iii), the proof of Lemma \ref{lim-crit} and the proof of Lemma \ref{l}(iv), 
we have 
$$\lim_{t\rightarrow+\infty}(\phi+\psi)(x(t))=(\phi+\psi)(\ol x).$$

We consider the following two cases.  

I. There exists $\ol t\geq 0$ such that $$(\phi+\psi)(x(\ol t))=(\phi+\psi)(\ol x).$$ 
From \eqref{obj-decr} we obtain for every $t\geq \ol t$ that
$$(\phi+\psi)(x(t))\leq(\phi+\psi)(x(\ol t))=(\phi+\psi)(\ol x)$$ 
Thus $(\phi+\psi)(x(t))=(\phi+\psi)(\ol x)$ for every $t\geq \ol t$. According to Lemma \ref{l-decr}(i) and \eqref{impr}, 
it follows that $\dot x(t)=\dot v(t)=0$ for almost every $t \in [\ol t, +\infty)$, hence $x$ and $v$ are constant on 
$[\ol t,+\infty)$ and the conclusion follows. 

II. For every $t\geq 0$ it holds $(\phi+\psi)(x(t))>(\phi+\psi)(\ol x)$. Take $\Omega:=\omega(x)$. 

By using Lemma \ref{l}(ii), (iv) and the fact that $\phi+\psi$ is a KL function, by Lemma \ref{unif-KL-property}, there exist positive numbers $\epsilon$ and $\eta$ and 
a concave function $\varphi\in\Theta_{\eta}$ such that for all $u$ belonging to the intersection
\begin{equation}\label{int-H}  \{u\in\R^n: \dist(u,\Omega)<\epsilon\}  
  \cap\left\{u\in\R^n:(\phi+\psi)(\ol x)<(\phi+\psi)(u)<(\phi+\psi)(\ol x)+\eta\right\},
 \end{equation}
one has
\begin{equation}\label{ineq-H}\varphi'\Big((\phi+\psi)(u)-(\phi+\psi)(\ol x)\Big)\cdot\|\nabla\phi (u)+\nabla\psi(u)\|\ge 1.\end{equation}

Let $t_1\geq 0$ be such that $(\phi+\psi)(x(t))<(\phi+\psi)(\ol x)+\eta$ for all $t\geq t_1$. Since 
$\lim_{t\to+\infty}\dist\big(x(t),\Omega\big)=0$ (see Lemma \ref{l}(iii)), there exists $t_2\geq 0$ such that for all $t\geq t_2$ the inequality 
$\dist\big(x(t),\Omega\big)<\epsilon$ holds. Hence for all $t\geq T:=\max\{t_1,t_2\}$, 
$x(t)$ belongs to the intersection in \eqref{int-H}. Thus, according to \eqref{ineq-H}, for every $t\geq T$ we have
\begin{equation}\label{ineq-Ht1}\varphi'\Big((\phi+\psi)(x(t))-(\phi+\psi)(\ol x)\Big)\cdot
\|\nabla\phi (x(t))+\nabla\psi(x(t))\|\ge 1.\end{equation}
From the second equation in \eqref{dyn-syst2} we obtain for almost every $t \in [T, +\infty)$
\begin{equation}\label{ineq-Ht2}(\lambda\|\dot x(t)\|+\|\dot v(t)\|)\cdot\varphi'\Big((\phi+\psi)(x(t))-(\phi+\psi)(\ol x)\Big)
\ge 1.\end{equation}

By using Lemma \ref{l-decr}(i), that $\varphi'>0$ and 
\begin{equation*}
\frac{d}{dt}\varphi\Big((\phi+\psi)(x(t))-(\phi+\psi)(\ol x)\Big)=
\varphi'\Big((\phi+\psi)(x(t))-(\phi+\psi)(\ol x)\Big)\frac{d}{dt}(\phi+\psi)(x(t)),
\end{equation*}
we further deduce that for almost every $t \in [T, +\infty)$ it holds
\begin{equation}\label{ineq-pt-conv-r} \frac{d}{dt}\varphi\Big((\phi+\psi)(x(t))-(\phi+\psi)(\ol x)\Big)\leq 
-\frac{\lambda\|\dot x(t)\|^2+\langle \dot x(t),\dot v(t)\rangle }{\lambda\|\dot x(t)\|+\|\dot v(t)\|}.\end{equation}
We invoke now Lemma \ref{impr} and obtain 
\begin{equation}\label{ineq-pt-conv-r-impr} \frac{d}{dt}\varphi\Big((\phi+\psi)(x(t))-(\phi+\psi)(\ol x)\Big)\leq 
-\frac{\lambda\|\dot x(t)\|^2+\rho\|\dot v(t)\|^2}{\lambda\|\dot x(t)\|+\|\dot v(t)\|}.\end{equation}

Let $\alpha>0$ (not depending on $t$) be such that 
\begin{equation}\label{ineq-alpha}-\frac{\lambda\|\dot x(t)\|^2+\rho\|\dot v(t)\|^2}{\lambda\|\dot x(t)\|+\|\dot v(t)\|}\leq
-\alpha\|\dot x(t)\|-\alpha\|\dot v(t)\| \ \forall t\geq 0.\end{equation}
One can for instance chose $\alpha>0$ such that 
$2\alpha\max(\lambda,1)\leq\min(\lambda,\rho)$. 
From \eqref{ineq-pt-conv-r-impr} we derive the inequality 
\begin{equation}\label{ineq-pt-conv-r2} \frac{d}{dt}\varphi\Big((\phi+\psi)(x(t))-(\phi+\psi)(\ol x)\Big)\leq 
-\alpha\|\dot x(t)\|-\alpha\|\dot v(t)\|,\end{equation}
which holds for almost every $t\geq T$. 
Since $\varphi$ is bounded from below, by integration it follows $\dot x,\dot v\in L^1([0,+\infty);\R^n)$. 
From here we obtain that $\lim_{t\rightarrow+\infty}x(t)$ exists and the conclusion follows from the results 
obtained in the previous section.
\end{proof}

\begin{remark}\label{techn}\rm Taking a closer look at the above proof, one can notice that the inequality 
\eqref{ineq-pt-conv-r} can be obtained also when $\phi:\R^n\to\R\cup\{+\infty\}$ is a (possibly nonsmooth) proper, convex and lower semicontinuous function. Though, in order to conclude that $\dot x\in L^1([0,+\infty);\R^n)$ the 
inequality obtained in Lemma \ref{diff-Phix}(i) is not enough. The improved version stated in Lemma \ref{diff-Phix2} is crucial 
in the convergence analysis. 

If one attempts to obtain in the nonsmooth setting the inequality stated in Lemma \ref{diff-Phix2}, from the proof of Lemma \ref{diff-Phix2} it becomes clear that one would need the inequality
$$\langle \xi_1^*-\xi_2^*,x_1-x_2\rangle\geq \rho\|\xi_1^*-\xi_2^*\|^2$$
for all $(x_1,x_2)\in\R^n\times\R^n$ and all $(\xi_1^*,\xi^*_2)\in\R^n\times\R^n$ such that $\xi_1^*\in\partial \phi(x_1)$ and 
$\xi_2^*\in\partial \phi(x_2)$. This is nothing else than (see for example \cite{bauschke-book})
$$\langle \xi_1^*-\xi_2^*,x_1-x_2\rangle\geq \rho\|\xi_1^*-\xi_2^*\|^2$$
for all $(x_1,x_2)\in\R^n\times\R^n$ and all $(\xi_1^*,\xi^*_2)\in\R^n\times\R^n$ such that $x_1\in\partial \phi^*(\xi_1^*)$ and 
$x_2\in\partial \phi^*(\xi_2^*)$. Here $\phi^*:\R^n\to\B$ denotes the Fenchel conjugate of $\phi$, defined for all $x^*\in\R^n$ by 
$\phi^*(x^*)=\sup_{x\in\R^n}\{\langle x^*,x\rangle-\phi(x)\}$. The latter inequality is equivalent to 
$\partial \phi^*$ is $\rho$-strongly monotone, which is further equivalent (see \cite[Theorem 3.5.10]{Zal-carte} or \cite{bauschke-book}) 
to $\phi^*$ is is strongly convex. This is the same with asking that $\phi$ is differentiable on the whole $\R^n$ with Lipschitz-continuous gradient (see 
\cite[Theorem 18.15]{bauschke-book}). In conclusion, the smooth setting provides the necessary prerequisites for obtaining the result in Lemma \ref{diff-Phix2} and, finally, Theorem \ref{conv-kl}.
\end{remark}

\section{Convergence rates}\label{sec4}

In this subsection we investigate the convergence rates of the trajectories $(x(t), v(t))$ generated by the dynamical system \eqref{dyn-syst2} as $t \rightarrow +\infty$. 
When solving optimization problems involving KL functions, convergence rates have been proved to depend on the so-called  \L{}ojasiewicz exponent 
(see \cite{lojasiewicz1963, b-d-l2006, attouch-bolte2009, f-g-peyp}). The main result of this subsection refers to the KL functions 
which satisfy Definition \ref{KL-property}  for $\varphi(s)=Cs^{1-\theta}$, where $C>0$ and $\theta\in(0,1)$. We recall the following 
definition considered in \cite{attouch-bolte2009}. 

\begin{definition}\label{kl-phi} \rm Let $f:\R^n\rightarrow\R\cup\{+\infty\}$ be a proper and lower semicontinuous function. 
The function $f$ is said to have the \L{}ojasiewicz property, if for every $\ol x\in\crit f$ there exist $C,\varepsilon >0$ and 
$\theta\in(0,1)$ such that 
\begin{equation}\label{kl-phi-ineq}|f(x)-f(\ol x)|^{\theta}\leq C\|x^*\| \ \mbox{for every} \ x \ \mbox{fulfilling} 
\ \|x-\ol x\|<\varepsilon \mbox{ and every} \ x^*\in\partial_L f(x).\end{equation}
\end{definition}

According to \cite[Lemma 2.1 and Remark 3.2(b)]{att-b-red-soub2010}, the KL property is automatically 
satisfied at any noncritical point, fact which motivates the restriction to critical points in the above definition. 
The real number $\theta$ in the above definition is called \emph{\L{}ojasiewicz exponent} of the function $f$ at the critical point  $\ol x$. 

The convergence rates obtained in the following theorem are in the spirit of \cite{b-d-l2006} and \cite{attouch-bolte2009}. 

\begin{theorem}\label{conv-r} Let $x_0,v_0\in\R^n$ and $\lambda>0$ be such that 
$v_0=\nabla\phi(x_0)$. Let $(x,v):[0,+\infty)\rightarrow\R^n \times \R^n$ 
be the unique strong global solution of the dynamical system \eqref{dyn-syst2}. 
Suppose that  $x$ is bounded and $\phi+\psi$ is a function which is bounded from below and satisfies Definition \ref{KL-property} 
for $\varphi(s)=Cs^{1-\theta}$, where $C>0$ and 
$\theta\in(0,1)$. Then there exists $\ol x\in\crit (\phi+\psi)$ (that is $\nabla(\phi+\psi)(\ol x)=0$) such that 
$\lim_{t\rightarrow+\infty}x(t)=\ol x$ and $\lim_{t\rightarrow+\infty}v(t)=\nabla\phi(\ol x)=-\nabla\psi(\ol x)$. Let $\theta$ be the 
\L{}ojasiewicz exponent of $\phi+\psi$ at $\ol x$, according to the Definition \ref{kl-phi}. Then there exist 
$a_1,b_1,a_2,b_2>0$ and $t_0\geq 0$ such that for every $t\geq t_0$ the following statements are true: 
\begin{itemize}\item[(i)] if $\theta\in (0,\frac{1}{2})$, then $x$ and $v$ converge in finite time;
\item[(ii)] if $\theta=\frac{1}{2}$, then $\|x(t)-\ol x\|+\|v(t)-\nabla\phi(\ol x)\|\leq a_1\exp(-b_1t)$;
\item[(iii)] if $\theta\in (\frac{1}{2},1)$, then $\|x(t)-\ol x\|+\|v(t)-\nabla\phi(\ol x)\|\leq (a_2t+b_2)^{-\left(\frac{1-\theta}{2\theta-1}\right)}$.
\end{itemize}
\end{theorem}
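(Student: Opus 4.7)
The plan is to control the rate of convergence via a one-dimensional differential inequality for the tail length
$$\sigma(t) := \int_t^{+\infty}\bigl(\|\dot x(s)\| + \|\dot v(s)\|\bigr)\,ds,$$
which is well defined since inequality \eqref{ineq-pt-conv-r2} in the proof of Theorem \ref{conv-kl} already yields $\dot x,\dot v \in L^1([0,+\infty);\R^n)$. Theorem \ref{conv-kl} itself provides $\overline x \in \crit(\phi+\psi)$ with $x(t)\to \overline x$, and the Lipschitz continuity of $\nabla\phi$ then gives $v(t)=\nabla\phi(x(t))\to\nabla\phi(\overline x)=-\nabla\psi(\overline x)$. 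Integration of $\dot x$ and $\dot v$ yields the crucial tail bound
$$\|x(t)-\overline x\| + \|v(t)-\nabla\phi(\overline x)\| \leq 2\sigma(t),$$
so the task reduces to estimating $\sigma(t)$. I would also dispose of the degenerate case where $(\phi+\psi)(x(\overline t))=(\phi+\psi)(\overline x)$ at some $\overline t$ by invoking Case I from the proof of Theorem \ref{conv-kl}: this forces $\dot x=\dot v=0$ almost everywhere on $[\overline t,+\infty)$, making $x,v$ eventually constant and covering statement (i) in that situation.

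Assuming $(\phi+\psi)(x(t))>(\phi+\psi)(\overline x)$ for all $t\geq 0$, I would pick $t_0\geq 0$ large enough so that $x(t)$ simultaneously lies in the $\varepsilon$-neighborhood of $\overline x$ where the \L{}ojasiewicz inequality of Definition \ref{kl-phi} applies and in the region where $\varphi(s)=Cs^{1-\theta}$ can be used. The argument then couples two estimates. On the one hand, integrating \eqref{ineq-pt-conv-r2} from $t$ to $+\infty$ gives
$$\alpha\,\sigma(t) \leq \varphi\bigl((\phi+\psi)(x(t))-(\phi+\psi)(\overline x)\bigr) = C\bigl[(\phi+\psi)(x(t))-(\phi+\psi)(\overline x)\bigr]^{1-\theta}.$$
On the other hand, plugging the identity $\nabla\phi(x(t))+\nabla\psi(x(t))=-\lambda\dot x(t)-\dot v(t)$ from \eqref{dyn-syst2} into the \L{}ojasiewicz inequality yields a constant $C'>0$ with
$$\bigl[(\phi+\psi)(x(t))-(\phi+\psi)(\overline x)\bigr]^{\theta} \leq C'\bigl(-\dot\sigma(t)\bigr)$$
for almost every $t\geq t_0$. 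Eliminating the function-value gap between these two bounds (raise the first to the power $\theta/(1-\theta)$, multiply, and absorb constants) produces the target differential inequality
$$-\dot\sigma(t) \geq \kappa\,\sigma(t)^{q}, \qquad q:=\frac{\theta}{1-\theta},$$
valid almost everywhere on $[t_0,+\infty)$ for some $\kappa>0$.

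It remains to integrate this ODE in the three regimes. For $\theta=1/2$ one has $q=1$ and Gr\"onwall's lemma yields exponential decay $\sigma(t)\leq\sigma(t_0)e^{-\kappa(t-t_0)}$, which after multiplying by $2$ gives (ii). For $\theta\in(1/2,1)$ one has $q>1$; differentiating $t\mapsto\sigma(t)^{1-q}$ gives $\sigma(t)^{1-q}\geq\sigma(t_0)^{1-q}+(q-1)\kappa(t-t_0)$, whence $\sigma(t)\leq(a_2 t+b_2)^{-1/(q-1)}$, and the exponent simplifies to $(1-\theta)/(2\theta-1)$, matching (iii). For $\theta\in(0,1/2)$ one has $q<1$, and the same integration yields $\sigma(t)^{1-q}\leq\sigma(t_0)^{1-q}-(1-q)\kappa(t-t_0)$, forcing $\sigma$ to vanish in finite time; past that time $\dot x=\dot v=0$ almost everywhere, which proves (i). The only delicate step is choosing $t_0$ large enough so that both KL-type inequalities apply simultaneously along the trajectory; once that is secured, coupling the two estimates and integrating the resulting ODE is standard.
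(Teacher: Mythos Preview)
Your proposal is correct and follows essentially the same route as the paper: define the tail length $\sigma(t)=\int_t^{+\infty}(\|\dot x\|+\|\dot v\|)\,ds$, bound $\|x(t)-\overline x\|+\|v(t)-\nabla\phi(\overline x)\|$ by $\sigma(t)$, then combine the integrated form of \eqref{ineq-pt-conv-r2} with the \L{}ojasiewicz inequality and the equation of \eqref{dyn-syst2} to obtain $-\dot\sigma(t)\geq\kappa\,\sigma(t)^{\theta/(1-\theta)}$, and finally integrate this differential inequality in the three regimes. The only cosmetic slip is the factor $2$ in your tail bound: since $\|x(t)-\overline x\|\leq\int_t^{+\infty}\|\dot x\|$ and $\|v(t)-\nabla\phi(\overline x)\|\leq\int_t^{+\infty}\|\dot v\|$ separately, their sum is already $\leq\sigma(t)$.
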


\begin{proof} According to the proof of Theorem \ref{conv-kl}, $\dot x,\dot v\in L^1([0,+\infty);\R^n)$ and there exists 
$\ol x\in\crit (\phi+\psi)$, in other words $\nabla(\phi+\psi)(\ol x)=0$, such that 
$\lim_{t\rightarrow+\infty}x(t)=\ol x$ and $\lim_{t\rightarrow+\infty}v(t)=\nabla\phi(\ol x)=-\nabla\psi(\ol x)$. 
Let $\theta$ be the \L{}ojasiewicz exponent of $\phi+\psi$ at $\ol x$, according to the Definition \ref{kl-phi}. 

We define $\sigma:[0,+\infty)\rightarrow[0,+\infty)$ by (see also \cite{b-d-l2006})  
$$\sigma(t)=\int_{t}^{+\infty}\|\dot x(s)\|ds +\int_{t}^{+\infty}\|\dot v(s)\|ds \ \mbox{ for all }t\geq 0.$$
It is immediate that 
\begin{equation}\label{x-sigma1}\|x(t)-\ol x\|\leq \int_{t}^{+\infty}\|\dot x(s)\|ds \ \forall t\geq 0.\end{equation}

Indeed, this follows by noticing that for $T \geq t$
\begin{align*}\|x(t)-\ol x\|= \ & \left\|x(T)-\ol x-\int_t^T\dot x(s)ds\right\|\\
                          \leq  & \ \|x(T)-\ol x\|+\int_{t}^T\|\dot x(s)\|ds,
                          \end{align*}
and by letting afterwards $T\rightarrow +\infty$.

Similarly, we have 
\begin{equation}\label{x-sigma2}\left\|v(t)-\nabla\phi(\ol x)\right\|\leq \int_{t}^{+\infty}\|\dot v(s)\|ds \ \forall t\geq 0.\end{equation}
From \eqref{x-sigma1} and \eqref{x-sigma2} we derive 
\begin{equation}\label{x-sigma}\|x(t)-\ol x\|+\left\|v(t)-\nabla\phi(\ol x)\right\|\leq \sigma(t)\ \forall t\geq 0.\end{equation}

We assume that for every $t\geq 0$ we have $(\phi+\psi)(x(t))>(\phi+\psi)(\ol x).$ As seen in the proof of 
Theorem \ref{conv-kl} otherwise the conclusion follows automatically. Furthermore, by invoking again the proof 
of Theorem \ref{conv-kl} , there exist $\varepsilon>0$, $t_0\geq 0$ and $\alpha>0$ such that for almost every $t\geq t_0$ (see \eqref{ineq-pt-conv-r2})
\begin{equation}\label{alpha-conv} \alpha \|\dot x(t)\|+\alpha \|\dot v(t)\|+ \frac{d}{dt}\Big[(\phi+\psi)(x(t))-(\phi+\psi)(\ol x)\Big]^{1-\theta}\leq 0\end{equation}
and
\begin{equation*} \left\| x(t)-\ol x\right\|<\varepsilon.\end{equation*}

We derive by integration for $T\geq t\geq t_0$
$$\alpha\int_t^{T}\|\dot x(s)\|ds+\alpha\int_t^{T}\|\dot v(s)\|ds+\Big[(\phi+\psi)(x(T))-(\phi+\psi)(\ol x)\Big]^{1-\theta} $$$$
\leq \Big[(\phi+\psi)(x(t))-(\phi+\psi)(\ol x)\Big]^{1-\theta},$$
hence 
\begin{equation}\label{ineq1}\alpha\sigma (t)\leq \Big[(\phi+\psi)(x(t))-(\phi+\psi)(\ol x)\Big]^{1-\theta} \ \forall t\geq t_0.\end{equation}

Since $\theta$ is the \L{}ojasiewicz exponent of $\phi+\psi$ at $\ol x$, we have 
$$\left|(\phi+\psi)(x(t))-(\phi+\psi)(\ol x)\right|^{\theta}\leq C\|\nabla(\phi+\psi)(x(t))\|$$
for every $t\geq t_0$. 
From the second relation in \eqref{dyn-syst2} we derive for almost every $t \in [t_0, +\infty)$
$$\left|(\phi+\psi)(x(t))-(\phi+\psi)(\ol x)\right|^{\theta}\leq C \lambda\|\dot x(t)\|+C\|\dot v(t)\|,$$ 
which combined with \eqref{ineq1} yields 
\begin{equation}\label{ineq2}\alpha\sigma (t)\leq \big(C \lambda\|\dot x(t)\|+C\|\dot v(t)\|\big)^{\frac{1-\theta}{\theta}}
\leq (C\max(\lambda,1))^{\frac{1-\theta}{\theta}}\cdot (\|\dot x(t)\|+\|\dot v(t)\|)^{\frac{1-\theta}{\theta}}.\end{equation}

Since \begin{equation}\label{dsigma}\dot \sigma (t)=-\|\dot  x(t)\|-\|\dot  v(t)\|,\end{equation} we conclude that there exists $\alpha'>0$ such that for almost every $t \in [t_0, +\infty)$
\begin{equation}\label{sigma} \dot\sigma (t)\leq -\alpha'\big(\sigma(t)\big)^{\frac{\theta}{1-\theta}}. 
\end{equation}

If $\theta=\frac{1}{2}$, then $$\dot\sigma (t)\leq -\alpha'\sigma(t)$$ for almost every $t \in [t_0, +\infty)$. By multiplying with 
$\exp(\alpha' t)$ and integrating afterwards 
from $t_0$ to $t$, it follows that there exist $a_1,b_1>0$ such that 
$$\sigma (t)\leq a_1\exp(-b_1t) \ \forall t\geq t_0$$ and the conclusion of (b) is immediate from \eqref{x-sigma}. 

Assume that $0<\theta<\frac{1}{2}$. We obtain from \eqref{sigma} 
$$\frac{d}{dt}\left(\sigma(t)^{\frac{1-2\theta}{1-\theta}}\right)\leq-\alpha' \frac{1-2\theta}{1-\theta}$$
for almost every $t \in [t_0, +\infty)$.

By integration we obtain $$\sigma(t)^{\frac{1-2\theta}{1-\theta}}\leq -\ol \alpha t+\ol \beta \ \forall t\geq t_0,$$ 
where $\ol \alpha>0$. Thus there exists $T\geq 0$ such that $$\sigma (T)\leq 0 \  \forall t\geq T,$$
which implies that $x$ and $y$ are constant on $[T,+\infty)$. 

Finally, suppose that $\frac{1}{2}<\theta<1$. We obtain from \eqref{sigma} 
$$\frac{d}{dt}\left(\sigma(t)^{\frac{1-2\theta}{1-\theta}}\right)\geq\alpha' \frac{2\theta-1}{1-\theta}$$
for almost every $t \in [t_0, +\infty)$. By integration we derive $$\sigma(t)\leq (a_2t+b_2)^{-\left(\frac{1-\theta}{2\theta-1}\right)} \ \forall t\geq t_0,$$ where 
$a_2,b_2>0$. Statement (c) follows from \eqref{x-sigma}.
\end{proof}

\end{document}